\newcommand{\reduline}[1]{}
\newcommand{\rcolored}[1]{}
 \newtheorem{thm}{Theorem}[section]
\newtheorem{ex}[thm]{Example}
 \newtheorem{cor}[thm]{Corollary}
 \newtheorem{lem}[thm]{Lemma}
 \newtheorem{prop}[thm]{Proposition}
 \theoremstyle{definition}
 \newtheorem{defn}[thm]{Definition}
 \theoremstyle{remark}
 \newtheorem{rem}[thm]{Remark}
 \theoremstyle{definition}
 \newtheorem{exm}{Example}[section]
 \def\Blem{\begin{lem}}
 \def\Elem{\end{lem}}
 \newcommand{\uccomment}[1]{}
 \def\Bpr{\begin{prop}}
 \def\Epr{\end{prop}}
 \def\Bp{\begin{proof}}
 \def\Ep{\end{proof}}
 \def\Bex{\begin{exm}}
 \def\Eex{\end{exm}}
 \def\Bcor{\begin{cor}}
 \def\Ecor{\end{cor}}
 \def\Br{\begin{rem}}
 \def\Er{\end{rem}}
 \def\Bthm{\begin{thm}}
 \def\Ethm{\end{thm}}
 \def\Bd{\begin{defn}}
 \def\Ed{\end{defn}}
 \def\Beq{\begin{equation}}
 \def\Eeq{\end{equation}}
 \newcommand{\To}{\longrightarrow}
\newcommand{\tx}{\tilde{X}}
  \def\NN{\mathbb{N}}
\journal{ }
\begin{document}

\begin{frontmatter}

\title{ When Is a Local Homeomorphism a Semicovering Map? }
\author[fumadep]{ Majid Kowkabi }
\ead{m.kowkabi@stu.um.ac.ir}
\author[fumadep]{Behrooz Mashayekhy \corref{cor1}}
\ead{bmashf@um.ac.ir}
\author[fumadep]{Hamid Torabi}
\ead{h.torabi@ferdowsi.um.ac.ir}

\address[fumadep]{Department of Pure Mathematics, Center of Excellence in Analysis on Algebraic Structures, Ferdowsi University of Mashhad,
P.O.Box 1159-91775, Mashhad, Iran }
%\address[fumadep1]{Department of Industrial Engineering, Ferdowsi University of Mashhad, Azadi Square,Mashhad 91775-1159, Iran }
\cortext[cor1]{Corresponding author}

%\author{Authors' identities suppressed: Blind refereeing copy}

\begin{abstract}
In this paper, by reviewing the concept of semicovering maps, we present some conditions under which a local homeomorphism becomes a semicovering map. We also obtain some conditions under which a local homeomorphism is a covering map.

\end{abstract}

\begin{keyword}
 local homeomorphism, fundamental group, covering map, semicovering map.
%% keywords here, in the form: keyword \sep keyword
\MSC[2010]{57M10\sep 57M12\sep 57M05}
%% MSC codes here, in the form: \MSC code \sep code
%% or \MSC[2008] code \sep code (2000 is the default)

\end{keyword}

\end{frontmatter}

\section{\bf Introduction}
It is well-known that every covering map is a local homeomorphism. The converse seems an interesting question that when a local homeomorphism is a covering map (see \cite{Wenyan}, \cite{Lelek}, \cite{Jungck}.) Recently, Brazas \cite[Definition 3.1]{B1} generalized the concept of covering map by the phrase {\it {``A semicovering map is a local homeomorphism with continuous lifting of paths and homotopies''}}. Note that a map $p: Y \to X $ has {\it{continuous lifting of paths}} if $ \rho_p: (\rho Y )_y \to (\rho X)_{p(y)}$ defined by $\rho_p(\alpha) = p \circ \alpha $ is a homeomorphism for all $y \in Y,$ where $(\rho Y)_y = \{\alpha: [0, 1] \to Y | \alpha(0) = y\}$. Also, a map $p: Y \to X $ {\it{has continuous lifting of homotopies}} if $\Phi_p: (\Phi Y )_y \to (\Phi X)_{p(y)}$
defined by $\Phi_p(\phi) = p \circ \phi$ is a homeomorphism for all $y \in Y$, where elements of $(\Phi Y)_y$ are endpoint preserving homotopies of paths starting at $y$.
It is easy to see that any covering map is a semicovering.

The quasitopological fundamental group $\pi_1^{qtop}(X,x)$ is the quotient space of the loop space
$\Omega (X,x) $ equipped with the compact-open topology with respect to the function $\Omega (X,x) \to \pi_1(X,x)$
identifying path components (see \cite{B}).
Biss \cite[Theorem 5.5]{B} showed that for a connected, locally path connected space $X$, there
is a one to one correspondence between its equivalent classes of connected covering spaces and the conjugacy classes of open subgroups of
its fundamental group $\pi_1^{qtop}(X,x)$. There is a misstep in the proof of the above theorem.
Torabi et al. \cite{t} pointed out the above misstep and gave the true classification of connected covering spaces of $X$ according to subgroups of the fundamental group  $\pi_1^{qtop}(X,x)$ with open core. Using this classification, it can be concluded that for a locally path connected space $X$ a semicovering map $p:\tilde{X} \To X$ is a covering map if and only if the core of $p_*(\pi_1(\tilde{X},\tilde{x}_0))$ in $ \pi_1(X,x_0)$ is an open subgroup of $\pi_1^{qtop}(X,x_0)$. By using this fact, we give some conditions under which a semicovering map becomes a covering map which extend some results of \cite{Wenyan}.

In Section 2, among reviewing the concept of local homeomorphism, path lifting property and unique path lifting property, we mention a result on uniqueness of lifting for local homeomorphism and a simplified definition \cite[Corollary 2.1]{Klevdal} for a semicovering map. Note that there is a misstep in the simplification of semicovering  which we give another proof to remedy this defect.
In Section 3, we intend to find some conditions under which a local homeomorphism is a semicovering map. Among other things, we prove that if $p:\tx \to X $ is a local homeomorphism, $\tx$ is Hausdorff and sequential compact, then $p$ is a semicovering map. Also, a closed local homeomorphism from a Hausdorff space is a semicovering map.
Moreover, a proper local homeomorphism from a Hausdorff space onto a Hausdorff space is a semicovering..

Finally in Section 4, we generalize some results of \cite{Wenyan}. In fact, by openness of the core of $p_*(\pi_1(\tilde{X},\tilde{x}_0))$ in $ \pi_1(X,x_0)$, for a local homeomorphism $p:\tilde{X} \To X$, we obtain some conditions under which a semicovering map is a covering map. More precisely, we prove that every finite sheeted semicovering map on a locally path connected space is a covering map. Also, a proper local homeomorphism from a Hausdorff space onto a locally path connected Hausdorff space is a covering map.

\section{Notations and Preliminaries}

In this paper, all maps $f : X \to Y$ between topological spaces
$X$ and $Y$ are continuous functions. We recall that a continuous map
$p:\tilde{X} \To X$,
is called a {\it local homeomorphism} if for every point
$\tilde{x} \in \tilde{X}$
there exists an open neighbourhood $\tilde{W} $ of $\tilde{x}$ such that $p(\tilde{W}) \subset X$ is open and the restriction map  $p|_{ \tilde{W}}: \tilde{W} \To p( \tilde{W})$ is a homeomorphism. In this paper, we denote a local homeomorphism $p:\tilde{X} \To X$ by $( \tx,p)$ and assume that $\tx$ is path connected and $p$ is surjective.
\begin{defn}
Assume that $X$ and $\tx$ are topological spaces and $p:\tilde{X} \To X$ is a continuous map.
Let $f:(Y, y_0) \to (X, x_0)$ be a continuous map and $\tilde{x}_0\in p^{-1}(x_0)$. If there exists a continuous map $\tilde{f}:(Y, y_0) \to (\tx, \tilde{x}_0)$ such that $p \circ \tilde{f} =f $, then  $\tilde{f}$ is called a {\it lifting} of $f$.

The map $p$ has {\it path lifting property} if for every path $f$ in $X$, there exists a lifting $\tilde{f}:(I, 0) \to (\tx, \tilde{x}_0)$ of $f $.
Also, the map $p$ has {\it unique path lifting property} if for every path $f$ in $X$, there is at most one lifting $\tilde{f}:(I, 0) \to (\tx, \tilde{x}_0)$ of $f $ (see \cite{r}.)
\end{defn}

The following lemma is stated in \cite[Lemma 5.5]{F} for $Y =I$. One can state it for an arbitrary map $f: X \to Y$ for a connected space $Y$.
\begin{lem}\label{unique}
Let $(\tx, p)$ be a local homeomorphism on $X$, $Y$ be a connected space, $\tx$ be Hausdorff and $f:(Y, y_0) \to (X, x_0)$ be a continuous map. Given $\tilde{x}\in p^{-1}(x_0)$ there is at most one lifting $\tilde{f}:(Y, y_0) \to (\tx, \tilde{x}_0)$ of $f.$
\end{lem}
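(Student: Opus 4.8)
The plan is to run the classical ``agreement set'' connectedness argument. Suppose $\tilde{f}_1,\tilde{f}_2:(Y,y_0)\to(\tx,\tilde{x}_0)$ are two liftings of $f$, and set
\[
A=\{\,y\in Y : \tilde{f}_1(y)=\tilde{f}_2(y)\,\}.
\]
Since $\tilde{f}_1(y_0)=\tilde{x}_0=\tilde{f}_2(y_0)$, the set $A$ is nonempty. As $Y$ is connected, it suffices to prove that $A$ is both closed and open in $Y$; then $A=Y$ and the two liftings coincide.

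For closedness I would invoke that $\tx$ is Hausdorff. If $y\notin A$, then $\tilde{f}_1(y)\neq\tilde{f}_2(y)$, so there are disjoint open sets $U_1,U_2$ in $\tx$ with $\tilde{f}_i(y)\in U_i$; by continuity, $\tilde{f}_1^{-1}(U_1)\cap\tilde{f}_2^{-1}(U_2)$ is an open neighbourhood of $y$ disjoint from $A$. Hence $Y\setminus A$ is open and $A$ is closed. (Equivalently, $A$ is the preimage of the diagonal of $\tx\times\tx$, which is closed since $\tx$ is Hausdorff, under the continuous map $(\tilde{f}_1,\tilde{f}_2):Y\to\tx\times\tx$.)

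For openness I would use the local homeomorphism hypothesis. Let $y\in A$ and put $\tilde{x}=\tilde{f}_1(y)=\tilde{f}_2(y)$. Choose an open neighbourhood $\tilde{W}$ of $\tilde{x}$ such that $p|_{\tilde{W}}:\tilde{W}\to p(\tilde{W})$ is a homeomorphism onto an open set. By continuity of both liftings, $V:=\tilde{f}_1^{-1}(\tilde{W})\cap\tilde{f}_2^{-1}(\tilde{W})$ is an open neighbourhood of $y$. For every $y'\in V$ we have $\tilde{f}_1(y'),\tilde{f}_2(y')\in\tilde{W}$ and $p(\tilde{f}_1(y'))=f(y')=p(\tilde{f}_2(y'))$; since $p|_{\tilde{W}}$ is injective, $\tilde{f}_1(y')=\tilde{f}_2(y')$, so $V\subseteq A$. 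Thus $A$ is open, and connectedness of $Y$ finishes the proof.

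This is essentially routine; the only place demanding a little care is ensuring that \emph{both} liftings are confined to the same sheet $\tilde{W}$ before applying injectivity of $p|_{\tilde{W}}$, which is exactly why one intersects the two preimages $\tilde{f}_1^{-1}(\tilde{W})$ and $\tilde{f}_2^{-1}(\tilde{W})$. Note that neither compactness nor path connectedness of $\tx$ is used here: only that $p$ is a local homeomorphism, that $\tx$ is Hausdorff, and that $Y$ is connected.
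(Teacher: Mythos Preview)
Your argument is correct and is the standard ``agreement set'' proof: the set where two lifts agree is nonempty, closed (by Hausdorffness of $\tilde X$), and open (by the local homeomorphism property), hence all of $Y$ by connectedness. The paper itself does not supply a proof of this lemma; it merely cites \cite[Lemma 5.5]{F} for the special case $Y=I$ and remarks that the statement extends to an arbitrary connected $Y$, so there is nothing to compare against beyond noting that your proof is exactly the routine extension the authors allude to.
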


The following theorem can be concluded from \cite[Definition 7, Lemma 2.1, Proposition 2.2]{Klevdal}.

\begin{thm} \label{semicovering map}(\cite[Corollary 2.1]{Klevdal}).
A map $p:\tilde{X} \To X$ is a semicovering map if and only if it is a local homeomorphism with unique path lifting and path lifting properties.
\end{thm}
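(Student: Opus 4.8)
The statement is an equivalence whose forward direction merely unwinds the definition of a semicovering, so essentially all the content lies in the converse, where bijectivity of the lifting maps must be upgraded to their being homeomorphisms. \emph{Forward direction.} If $p$ is a semicovering map then $p$ is a local homeomorphism and, for every $\tilde y\in\tilde X$, the map $\rho_p\colon(\rho\tilde X)_{\tilde y}\to(\rho X)_{p(\tilde y)}$, $\alpha\mapsto p\circ\alpha$, is a homeomorphism, hence a bijection. Surjectivity of $\rho_p$ is precisely the path lifting property (every path in $X$ from $p(\tilde y)$ has a lift from $\tilde y$) and injectivity of $\rho_p$ is precisely the unique path lifting property.

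\emph{Converse.} Suppose $p$ is a local homeomorphism with the path lifting and unique path lifting properties; fix $\tilde y\in\tilde X$ and put $x_0=p(\tilde y)$. Then $\rho_p$ is bijective (surjective by path lifting, injective by unique path lifting) and continuous, since $\rho_p^{-1}(\langle K,U\rangle)=\langle K,p^{-1}(U)\rangle$ for a subbasic compact-open set $\langle K,U\rangle$. It remains to show that $\rho_p^{-1}$ is continuous, i.e.\ that $\rho_p$ is open, and I would do this via a ``uniform lifting'' estimate. Given $\alpha\in(\rho X)_{x_0}$ with unique lift $\tilde\alpha$ and a basic neighbourhood $\mathcal U=\bigcap_j\langle K_j,\tilde U_j\rangle$ of $\tilde\alpha$, use compactness of $[0,1]$ together with the local homeomorphism property to choose a partition $0=t_0<\dots<t_n=1$ and open sets $\tilde W_i\subseteq\tilde X$ with $\tilde\alpha([t_{i-1},t_i])\subseteq\tilde W_i$, $p|_{\tilde W_i}\colon\tilde W_i\to W_i$ a homeomorphism onto the open set $W_i:=p(\tilde W_i)$, the partition fine enough that $\tilde W_i\subseteq\tilde U_j$ whenever $[t_{i-1},t_i]$ meets $K_j$; note $\tilde\alpha(t_i)\in\tilde W_i\cap\tilde W_{i+1}$ automatically. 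Let $\mathcal V$ be the intersection of the subbasic sets $\langle[t_{i-1},t_i],W_i\rangle$ for $1\le i\le n$ and $\langle\{t_i\},p(\tilde W_i\cap\tilde W_{i+1})\rangle$ for $1\le i\le n-1$; this is an open neighbourhood of $\alpha$, because a local homeomorphism is an open map and $\alpha(t_i)=p(\tilde\alpha(t_i))\in p(\tilde W_i\cap\tilde W_{i+1})$. For $\beta\in\mathcal V$, define $\tilde\beta|_{[t_{i-1},t_i]}:=(p|_{\tilde W_i})^{-1}\circ\beta|_{[t_{i-1},t_i]}$; then $\tilde\beta([t_{i-1},t_i])\subseteq\tilde W_i$, so $\tilde\beta\in\mathcal U$, and unique path lifting identifies $\tilde\beta$ with $\rho_p^{-1}(\beta)$. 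Hence $\rho_p^{-1}(\mathcal V)\subseteq\mathcal U$, proving $\rho_p$ open.

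\emph{The main obstacle.} The delicate step, and exactly the point at which the simplification in the literature is defective, is that the piecewise formula for $\tilde\beta$ need not define a continuous single-valued lift: a priori $(p|_{\tilde W_i})^{-1}$ and $(p|_{\tilde W_{i+1}})^{-1}$ differ on $W_i\cap W_{i+1}$, and the construction may jump to a wrong sheet of $p^{-1}$. The extra constraint $\beta(t_i)\in p(\tilde W_i\cap\tilde W_{i+1})$ built into $\mathcal V$ is what repairs this: it forces both $(p|_{\tilde W_i})^{-1}(\beta(t_i))$ and $(p|_{\tilde W_{i+1}})^{-1}(\beta(t_i))$ to equal the unique point of $p^{-1}(\beta(t_i))$ lying in $\tilde W_i\cap\tilde W_{i+1}$, so the pieces agree at the partition points and $\tilde\beta$ is continuous; together with $\tilde\beta(0)=(p|_{\tilde W_1})^{-1}(x_0)=\tilde y$ (as $\tilde y=\tilde\alpha(0)\in\tilde W_1$), this makes $\tilde\beta$ a genuine lift, which unique path lifting then identifies with $\rho_p^{-1}(\beta)$. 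I expect this overlap bookkeeping, and arranging a single partition adapted simultaneously to the charts and to the finitely many $K_j$ by a Lebesgue-number argument, to be the bulk of the work.

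\emph{Continuous lifting of homotopies.} Finally, for $p$ to be a semicovering one also needs $\Phi_p\colon(\Phi\tilde X)_{\tilde y}\to(\Phi X)_{x_0}$ to be a homeomorphism for every $\tilde y$. This can be obtained by rerunning the argument over the square $[0,1]^2$: an endpoint-preserving homotopy $H$ lifts uniquely (each horizontal segment is a path, lifted uniquely by hypothesis, and these fit together continuously), so $\Phi_p$ is a continuous bijection, and continuity of $\Phi_p^{-1}$ follows from the same compact-open estimate applied to a grid of subrectangles on which $p$ restricts to a homeomorphism, with the sheet-matching handled exactly as above. Alternatively, one may invoke the part of \cite{Klevdal} deducing continuous lifting of homotopies from continuous lifting of paths for local homeomorphisms, which is not affected by the gap. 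In all cases $\rho_p$ and $\Phi_p$ are homeomorphisms for every $\tilde y$, so $p$ is a semicovering map.
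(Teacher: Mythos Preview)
Your argument is correct and takes a genuinely different route from the paper. The paper does not prove the theorem from scratch; it cites Klevdal and then, to repair the gap in Klevdal's Lemma~2.1, supplies Lemma~\ref{Homotopy} (the Local Homeomorphism Homotopy Theorem), which establishes existence of a lift $\tilde F\colon I\times I\to\tilde X$ by a maximal-interval argument: one shows that the set of $r\in I$ for which $F|_{I\times[0,r]}$ lifts continuously is closed under suprema and has maximum $1$, repeatedly invoking unique path lifting to glue local pieces. You instead attack openness of $\rho_p$ directly in the compact-open topology, and your overlap clause $\langle\{t_i\},\,p(\tilde W_i\cap\tilde W_{i+1})\rangle$ is exactly the device needed to force the two local inverses to agree at partition points. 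One caution: according to the paper, the defect in Klevdal lies precisely in the homotopy-lifting lemma (the worry being that $h_t|_{K^j_n}$ may land on the wrong sheet), so your fallback of ``invoking the part of \cite{Klevdal} not affected by the gap'' is unsafe; your grid rerun on $I\times I$, or---more elegantly---deducing continuity of the homotopy lift from the already-established homeomorphism $\rho_p$ via the exponential law for $I$, is the right completion. Your approach yields a clean function-space argument and makes the continuity of lifting transparent; the paper's approach buys a standalone homotopy-lifting lemma proved without any reference to the compact-open topology.
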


Note that during reviewing process of the previous version of this paper, the referee pointed out that there exists a misstep in the proof of \cite[Lemma 2.1]{Klevdal}. More precisely, in the proof of \cite[Lemma 2.1]{Klevdal}, it
is not guaranteed that $h_t(K^j_n) \cap U)\neq \phi$, i.e., $h_t|_{K^j_n}$ might be a lift of $\gamma$ that is different from $(p|_U)^{-1}\circ \gamma$. After some attempts to find a proof for the misstep, we found out that the method in the proof of \cite[Lemma 2.1]{Klevdal} dose not work. Now, we give a proof for \cite[Lemma 2.1]{Klevdal} with a different method
as follows.

\begin{lem} \label{Homotopy}
(Local Homeomorphism Homotopy Theorem).
Let $p:\tilde{X} \To X$ be a local homeomorphism with unique path lifting and path lifting properties. Consider the diagram of continuous maps

\[
\xymatrix{
 I \ar[d]^{j} \ar[r]^{\tilde f} & (\tx, \tilde{x}_0) \ar[d]_p \\
 I\times I \ar[r]^F \ar@{-->}[ur]^{\tilde F} & (X, x_0),}
\]
where $j(t)= (t, 0)$ for all $t \in I$. Then there exists a unique continuous map $ \tilde{F}:I\times I \to \tx$ making the diagram commute.
\end{lem}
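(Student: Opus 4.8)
The plan is to construct $\tilde F$ column by column: for each $s\in I$ the restriction $F(\cdot\,,s)\colon I\to X$ is a path, and by the path lifting property there is a lift $\tilde F_s\colon I\to\tx$; the only freedom is the choice of starting point $\tilde F_s(0)$. To pin this down, first lift the ``bottom-left'' edge. Concretely, $F(0,\cdot)\colon I\to X$ is a path starting at $x_0$, so it has a unique lift $\gamma\colon I\to\tx$ with $\gamma(0)=\tilde x_0$ (uniqueness from unique path lifting, or from Lemma~\ref{unique}). Then for each $s$ define $\tilde F_s$ to be the unique lift of $F(\cdot\,,s)$ starting at $\gamma(s)$, and set $\tilde F(t,s)=\tilde F_s(t)$. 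Along $s=0$ this agrees with $\tilde f$ because $\tilde f$ is \emph{a} lift of $F(\cdot\,,0)$ starting at $\tilde f(0)=\tilde x_0=\gamma(0)$, and such a lift is unique; this gives commutativity of the left triangle, and $p\circ\tilde F=F$ holds by construction. Uniqueness of the whole $\tilde F$ follows from Lemma~\ref{unique} applied to the connected space $I\times I$.

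The substantive part is \emph{continuity} of $\tilde F$. The natural approach is a local one: fix $(t_0,s_0)$, let $\tilde W$ be an evenly-covered-type neighbourhood of $\tilde F(t_0,s_0)$ with $p|_{\tilde W}\colon\tilde W\to p(\tilde W)$ a homeomorphism, and aim to show $\tilde F$ equals $(p|_{\tilde W})^{-1}\circ F$ on a neighbourhood of $(t_0,s_0)$. Using a Lebesgue-number argument on the compact square, one subdivides $I\times I$ into small subrectangles $R_{ij}$ on each of which $F$ maps into some evenly covered open set. One then shows \emph{inductively}, rectangle by rectangle (ordered so that one always proceeds from an edge where the lift is already known to be the ``sheet'' lift), that on each $R_{ij}$ the map $\tilde F$ coincides with $(p|_{\tilde W_{ij}})^{-1}\circ F$ for the appropriate sheet $\tilde W_{ij}$, hence is continuous there; continuity on overlaps is then automatic, and continuity of $\tilde F$ on all of $I\times I$ follows.

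The step I expect to be the main obstacle is precisely the one the authors flag as the gap in \cite[Lemma 2.1]{Klevdal}: matching sheets across adjacent subrectangles. The issue is that on a subrectangle the column-wise lift $\tilde F_s$ a priori need only be \emph{some} lift of $F(\cdot\,,s)$, not obviously the restriction of the single-sheet lift $(p|_{\tilde W})^{-1}\circ F$; one must rule out that the lifted path ``jumps sheets'' within the rectangle. The way to close this is to exploit \emph{unique path lifting} honestly: on a subrectangle $R=[a,b]\times[c,d]$ with $F(R)\subset V$ evenly covered, for each fixed $s$ both $\tilde F_s|_{[a,b]}$ and $(p|_{\tilde W})^{-1}\circ F(\cdot\,,s)$ are lifts of the same path $F(\cdot\,,s)|_{[a,b]}$; if they agree at the left endpoint $t=a$ for every $s$ (which is the content of the inductive hypothesis coming from the previously treated neighbouring rectangle, together with continuity of $s\mapsto\tilde F_s(a)$ along that common edge), then by uniqueness of path lifting they agree on all of $R$. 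Starting the induction from the edges $t=0$ (where $\tilde F(0,s)=\gamma(s)$ is continuous by construction) and $s=0$ (where $\tilde F(t,0)=\tilde f(t)$), and propagating, one gets continuity everywhere. A small amount of care is needed to verify that $s\mapsto\tilde F_s(a)$ is continuous along a shared edge before invoking uniqueness — this is exactly where Hausdorffness of $\tx$ (hence Lemma~\ref{unique}) and the openness furnished by the local homeomorphism structure do the work.
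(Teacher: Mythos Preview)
Your construction of $\tilde F$---lift the left edge to $\gamma$, then for each $s$ lift the horizontal slice $F(\cdot,s)$ starting at $\gamma(s)$---is correct, and in fact this is the same function the paper builds: their $\gamma$ and $\lambda_a$ are your $\gamma$ and $\tilde F_a$. Commutativity and uniqueness are fine (uniqueness follows already from unique path lifting applied first to the left edge and then to each horizontal slice; invoking Lemma~\ref{unique}, which assumes $\tilde X$ Hausdorff, is unnecessary and not available under the stated hypotheses).

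The gap is in the continuity argument, and it is precisely the covering-map-versus-local-homeomorphism distinction. For a covering map one covers $X$ by evenly covered opens, takes a Lebesgue subdivision of $I\times I$, and on each subrectangle selects the sheet containing the already-matched entrance edge; such a sheet always exists because $p^{-1}(V)$ decomposes into disjoint copies of $V$. For a mere local homeomorphism this fails: the only opens available are $V=p(\tilde W)$ for a \emph{specific} $\tilde W\subset\tilde X$, and $p^{-1}(V)$ need not split into sheets at all. After your Lebesgue step you know $F(R_{ij})\subset p(\tilde W)$ for \emph{some} $\tilde W$ coming from the cover, but you have no control over which one; there is no reason the entrance edge $\tilde F(\{a\}\times[c,d])$ lies in that particular $\tilde W$, nor that any $\tilde W'$ containing the entrance edge has $F(R_{ij})\subset p(\tilde W')$. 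Your phrase ``for the appropriate sheet $\tilde W_{ij}$'' hides exactly this mismatch, and it is the same mechanism that breaks Klevdal's argument.

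The paper avoids this by never fixing a global subdivision. It runs a supremum argument over strips $I\times[0,r]$: given a continuous lift on $I\times\{r\}$, one covers the one-dimensional $I$ by the preimages $\tilde F(\cdot,r)^{-1}(\tilde W_{\tilde F(t,r)})$, takes a Lebesgue partition of $I$, and then thickens each subinterval slightly in the $s$-direction into $p(\tilde W_{\tilde F(t_i,r)})$. Because the sheets are chosen from the \emph{known continuous} boundary path, the entrance edge lies in the chosen sheet by construction, and gluing across vertical edges follows from unique path lifting. The sup-attained step uses the same device with $\gamma$ and $\lambda_a$. Your approach can be repaired in the same spirit---for each fixed $s_0$ subdivide $I$ using the continuous path $\tilde F_{s_0}$, then thicken, rather than subdividing $I\times I$ once in advance---but as written the single a~priori Lebesgue step does not close the gap.
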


\begin{proof}
Put $ \tilde{F}(t,0)= \tilde{f}(t)$,  for all $ t \in I $, and  $W_{\tilde{f}(t)}$ an open neighborhood of $\tilde{f}(t)$ in $ \tx $ such that $ p |_{W_ {\tilde{f}(t)}}:
W_{ \tilde{f}(t)} \to p(W_ {\tilde{f}(t)})$ is a homeomorphism. Then $\{\tilde{f}^{-1}(W_{\tilde{f}(t)})|t \in I\} $ is an open cover for $I$. Since $I$ is compact, there
exists $n \in \NN$ such that for every $0 \leq i \leq n-1$ the interval $[\frac{i}{n}, \frac{i+1}{n}]$ is contained in $\tilde{f}^{-1}(W_{\tilde{f}(t_i)})$
for some $t_i \in I$. For every $0 \leq i \leq n-1$, $ F^{-1}(p(W_{{\tilde{f}}(t_i)}))$ is open in $I \times I$ which contains $(\frac{i}{n},0)$. Hence
there exists $s_i \in I$ such that $[\frac{i}{n}, \frac{i+1}{n}] \times [0, s_i]$ is contained in $ F^{-1}(p(W_{{\tilde{f}}(t_i)}))$ and so $F([\frac{i}{n},
 \frac{i+1}{n}] \times [0, s_i]) \subseteq p(W_{{\tilde{f}}(t_i)}) $. Since $ p |_{W_ {\tilde{f}(t_i)}}:W_{ \tilde{f}(t_i)} \to p(W_ {\tilde{f}(t_i)})$ is
a homeomorphism, we can define $\tilde{F}$ on $k_i = [\frac{i}{n}, \frac{i+1}{n}] \times [0, s_i] $ by $p^{-1}|_{W_{\tilde{f}(t_i)}} \circ F|_{k_i}$. Let
 $s = \min \{s_i|0 \leq i \leq n-1\}$, then by gluing lemma, we can define $\tilde{F}$ on $I\times[0, s]$ since
$ \{ \frac{i+1}{n}\} \times [0, s] \in k_i \cap k_{i+1}.$  Put $A = \{r \in I| \ \mathrm {there} \ \mathrm {exists} \ \tilde{F_r}:I \times [0, r] \to
\tx \ \mathrm {such \ that} \ F(x, y) = p \circ \tilde{F_r}(x, y)  \mathrm {\ for \ every \ } (x,y) \in I \times [0, r]\ \mathrm {and}\ \tilde{F}(t, 0) = \tilde{f}(t)\}.$
 Note that $A$ is a nonempty since $ 0 \in A$. We show that there exists $M \in I$ such that $M = \max{A}$. For this, consider an increasing sequence
  $\{a_n\}_{n \in \NN}$ in $A$ such that $a_n \to a$. We show that $a \in A$. Let $s < a$, then there exists $n_s \in \NN$ such that $ s \leq a_{n_s}$.
We define $ H: I \times [0, a] \to \tx$ by $$  H(t, s) =  \begin{cases}  \tilde{F}_{a_{n_s}}(t, s) & s < a \\ \lambda_a(t) & s = a,\end{cases}
$$ where $\lambda_a(t)$ is the lifting of
the path $F (I\times \{a\})$ starting at $\gamma(a)$ and $\gamma$ is the lifting of the path $F(0, t)$ for $t \in I$ starting at $\tilde{f}(0)$. Note that the existence of
 $\lambda_a(t)$ and $\gamma$ are due to path lifting property of $p$. The map $H|_{I \times [0,a)}$ is well defined and continuous since if $r_1, r_2 \in A$ and
 $r_1 < r_2$, then $\tilde{F}_{r_2}|_{I\times [0, r_1)} = \tilde{F}_{r_1}$ and $p$ is a local homeomorphism with unique path lifting property. Therefore  $H|_{\{0\} \times [0, a)}$ is a lifting of $F|_{\{0\} \times [0, a)}$ starting at $\tilde{f}(0)$ which implies that $H(0, t)= \gamma (t)$ for every $0\leq t < a$.
Put $B = \{t \in I| H|_{\{I \times [0,a)\}\cup \{[0, t] \times \{a\}\}} \mathrm {is \ continuous} \}$. We show that, there exists $0 < \epsilon < 1$ such that $ H|_{I \times [0,a) \cup \{[0, \epsilon)\times \{a\}\}}$ is continuous.
For this, consider $U = U_{\gamma(a)}$ an open neighborhood of $\gamma(a)$ such that $p|_{U_{\gamma(a)}} : U_{\gamma(a)} \to p(U_{\gamma(a)})$ is
a homeomorphism. There exists $0 < \epsilon < 1$ such that $F ([0,\epsilon] \times [a - \epsilon, a]) \subseteq p(U_{\gamma(a)})$ so we have a lifting of $F|_{[0,\epsilon] \times [a - \epsilon, a]}$ in $U_{\gamma(a)}$ by $p^{-1}|_U \circ F|_{[0,\epsilon] \times [a - \epsilon, a]}$. Note that $p^{-1}|_U  \circ F|_{[0, \epsilon] \times [a-\epsilon, a]}(0, t)$ and $  \gamma(t)$ are two liftings of $F|_{\{0\} \times [a - \epsilon, a]}(0, t)$ such that $p^{-1}|_U  \circ F(0, a)= \lambda _a(0) = \gamma (a)$. By unique path lifting property we have $p^{-1}|_U  \circ F|_{[0, \epsilon] \times [a-\epsilon, a]}(0, t) =  \gamma(t)$ for $t \in [a-\epsilon, a] $ so $p^{-1}|_U  \circ F|_{[0, \epsilon] \times [a-\epsilon, a]}(0, a- \frac{\epsilon}{2})= \gamma(a-\frac{\epsilon}{2})$. Since $H (0, a- \frac{\epsilon}{2}) = \gamma(a-\frac{\epsilon}{2})$, by unique path lifting property we have $p^{-1}|_U  \circ F|_{[0, \epsilon] \times [a-\epsilon, a)} = H|_{[0, \epsilon] \times [a-\epsilon, a)}$. Note that $p^{-1}|_U  \circ F|_{[0, \epsilon] \times [a-\epsilon, a]}(t, a)$ and $  \lambda_a(t)$ are two liftings of $F|_{[0, \epsilon] \times\{a\} }(t, a)$ such that $p^{-1}|_U  \circ F (0, a) = \gamma(a)= \lambda_a(0) $. By unique path lifting property, we have $p^{-1} \circ F (t, a) = \lambda_a(t) = H(t, a)$ for $t \in [0, \epsilon]$. Hence $ H|_{I \times [0,a) \cup \{[0, \epsilon)\times \{a\}\}}$ is continuous which implies that $B$ is nonempty. We show that $B$ has a maximum element and $\max{B} =1$. For this, consider an increasing sequence
  $\{b_n\}_{n \in \NN}$ in $B $ such that $b_n \to b$. We know that $H$ is continuous on $\{I \times [0,a)\} \cup \{[0, b) \times \{a\}\}$. By a similar argument for the continuity of $ H|_{I \times [0,a) \cup \{[0, \epsilon)\times \{a\}\}}$, we can prove that $H$ is continuous on $\{I \times [0,a)\} \cup \{[0, b] \times \{a\}\}$ and  $\max B =1$. Thus $B = I$.
Therefore $a \in A$, which implies that $A$ has a maximum.
Finally, by a similar idea for constructing $\tilde{F}$ on $ I \times [0, s]$ we can show that $M =1$. Hence we have a lifting for $F$ by $p$ making the above diagram commute.
Uniqueness of $\tilde{F}$ is obtained by Lemma \ref{unique} since $ I \times I$ is connected.
\end {proof}

Note that there exists a local homeomorphism without unique path lifting and path lifting properties and so it is not a semicovering map .
\begin{ex}\label{ex}
Let $\tx = ( [0, 1] \times \{0 \} ) \bigcup (\{1/2\} \times [0, 1/2))$ with coherent topology with respect to $\{ [0, 1/2] \times \{0 \}, \ (1/2, 1] \times \{0 \}, \ \{1/2\} \times (0, 1/2)  \}$ and let $X = [0, 1]$. Define $p:\tx \to X$ by $$  p(s, t) =  \begin{cases}  s & t =0 \\ s+1/2 & s = 1/2 \end{cases}. $$ It is routine to check that $p$ is an onto local homeomorphism which dose not have unique path lifting and path lifting properties.
\end{ex}

\section{\bf When Is a Local Homeomorphism a Semicovering map?}

In this section, we obtained some conditions under which a local homeomorphism is a semicovering map.
First, we intend to show that if $p:\tx \to X $ is a local homeomorphism, $\tx$ is Hausdorff and sequential compact, then $p$ is a semicovering map. In order to do this,
we are going to study a local homeomorphism with a path which has no lifting.
\begin{lem}\label{exist}
Let $p:\tx \to X $ be a local homeomorphism, $f$ be an arbitrary path in $X$ and $ \tilde{x}_0 \in p^{-1}(f(0))$ such that there is no lifting of $f$
starting at $\tilde{x}_0$. If $ A_f = \{ t \in I \ | f|_{[0,t]} \ {\mathrm has} \mathrm{ \ a \ lifting} \  \hat{f}_t \ \mathrm{on} \  [0,t] \ \mathrm{with} \ \hat{f}_t(0)  = \tilde{x}_0\}$, then $A_f$ is open and connected. Moreover, there exists $ \alpha \in I $ such that $A_f = [0, \alpha)$.
\end{lem}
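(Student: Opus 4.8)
The plan is to establish the three claims in turn --- nonemptiness together with connectedness, openness, and then the precise shape $[0,\alpha)$ --- by elementary manipulations of lifts. Note that, unlike in Lemma~\ref{unique} or Lemma~\ref{Homotopy}, the statement assumes neither Hausdorffness of $\tx$ nor unique path lifting, so uniqueness of lifts is not available and will not be used.

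First I would observe that $0\in A_f$, since the constant map at $\tilde{x}_0$ is a lift of $f|_{[0,0]}$; hence $A_f\neq\emptyset$. Next, $A_f$ is downward closed: if $t\in A_f$ with lift $\hat{f}_t\colon[0,t]\to\tx$ satisfying $\hat{f}_t(0)=\tilde{x}_0$ and $p\circ\hat{f}_t=f|_{[0,t]}$, and if $0\le s\le t$, then $\hat{f}_t|_{[0,s]}$ is a lift of $f|_{[0,s]}$ starting at $\tilde{x}_0$, so $s\in A_f$. Therefore $A_f$ is a subinterval of $I$ containing $0$; in particular it is connected, and it is of the form $[0,\alpha)$ or $[0,\alpha]$ for some $\alpha\in I$ (the case $\alpha=1$ will be excluded below).

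The key step is openness of $A_f$ in $I$. Fix $t\in A_f$ with a lift $\hat{f}_t$ as above, and set $\tilde{w}=\hat{f}_t(t)$. Since $p$ is a local homeomorphism, pick an open neighbourhood $W$ of $\tilde{w}$ with $p|_W\colon W\to p(W)$ a homeomorphism onto an open subset of $X$. Then $f^{-1}(p(W))$ is open in $I$ and contains $t$, so there is $\delta>0$ with $f\big([t,t+\delta)\cap I\big)\subseteq p(W)$. Given any $t'\in[t,t+\delta)\cap I$, put $g=(p|_W)^{-1}\circ f|_{[t,t']}\colon[t,t']\to W\subseteq\tx$. Because $\tilde{w}\in W$ and $p(\tilde{w})=f(t)$, we get $g(t)=(p|_W)^{-1}(f(t))=\tilde{w}=\hat{f}_t(t)$, so the gluing lemma applied to the closed cover $\{[0,t],[t,t']\}$ of $[0,t']$ produces a continuous map $\hat{f}_{t'}\colon[0,t']\to\tx$ with $\hat{f}_{t'}|_{[0,t]}=\hat{f}_t$ and $\hat{f}_{t'}|_{[t,t']}=g$; one checks at once that $\hat{f}_{t'}(0)=\tilde{x}_0$ and $p\circ\hat{f}_{t'}=f|_{[0,t']}$. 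Hence $[0,t+\delta)\cap I\subseteq A_f$, so $t$ is an interior point of $A_f$ in $I$; as $t$ was arbitrary, $A_f$ is open in $I$.

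Finally, $1\notin A_f$, since otherwise there would be a lift of $f=f|_{[0,1]}$ starting at $\tilde{x}_0$, contrary to hypothesis. Combining the above: $A_f$ is a subinterval of $I$ containing $0$, open in $I$, and not containing $1$. If $\alpha:=\sup A_f$ belonged to $A_f$, the openness argument applied at $t=\alpha$ would give $[0,\alpha+\delta)\cap I\subseteq A_f$, forcing either $\alpha\ge 1$ (impossible, as $1\notin A_f$) or an element of $A_f$ exceeding $\sup A_f$ (absurd); hence $\alpha\notin A_f$ and $A_f=[0,\alpha)$ with $\alpha\in(0,1]\subseteq I$. I do not anticipate a genuine obstacle here: the only point requiring care is the junction of the two pieces in the gluing step, and there the identity $(p|_W)^{-1}(f(t))=\hat{f}_t(t)$ is exactly what makes the pasted map well defined and continuous.
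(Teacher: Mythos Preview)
Your proof is correct and follows essentially the same approach as the paper: both show $0\in A_f$, that $A_f$ is downward closed (hence an interval), and that $A_f$ is open by using the local homeomorphism property at $\hat f_t(t)$ to extend the lift via $(p|_W)^{-1}\circ f$ glued to $\hat f_t$. Your write-up is somewhat more careful---you explicitly verify the gluing junction, note $1\notin A_f$, and argue why $\sup A_f\notin A_f$---whereas the paper simply asserts that an open connected subset of $I$ containing $0$ must have the form $[0,\alpha)$; but the underlying argument is the same.
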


\begin{proof}
Let $\beta$ be an arbitrary element of $A_f$. Since $ p$ is a local homeomorphism, there exists an open neighborhood $W$ at $\hat{f}_\beta(\beta)$ such that $p|_{W}: W \to p(W)$ is a homeomorphism. Since $\hat{f}_\beta(\beta) \in W$, there exists an $\epsilon \in I$ such that $f[\beta, \beta +\epsilon] $ is a subset of $p(W)$. We can define a map $\hat{f}_{\beta + \epsilon}$ as follows:
$$ \hat{f}_{\beta + \epsilon}(t) =  \begin{cases}  \hat{f}_\beta(t) & t \in [0, \beta] \\ p|_{W}^{-1}(f(t)) & t \in [\beta, \beta +\epsilon] \end{cases}.$$
Hence $(0, \beta + \epsilon)$ is a subset of $A_f$ and so $A_f$ is open.

Suppose $t,s \in A$. Without loss of generality we can suppose that $ t \geq s $. By the definition of $A_f$, there exists $ \hat{f}_t$ and so $[0, t]$ is a subset of $A_f$. Also, every point between $s$ and $t$ belongs to $A_f$ hence $A_f$ is connected.
Since $A_f$ is open connected and $0 \in A_f$, there exists $\alpha \in I $ such that $A_f =[0, \alpha)$.
\end {proof}

Now, we prove the existence and uniqueness of a concept of a defective lifting.
\begin{lem}\label{cts}
let $p:\tx \to X $ be a local homeomorphism with unique path lifting property, $f$ be an arbitrary path in $X$ and $ \tilde{x}_0 \in p^{-1}(f(0))$, such that there is no lifting of $f$ starting at $\tilde{x}_0$. Then, using the notation of the previous lemma, there exists a unique continuous map $\tilde{f}_{\alpha}: A_f = [0,\alpha) \to \tx$ such that $p\circ\tilde{f}_{\alpha} = f|_{[0,\alpha)}$.
\end{lem}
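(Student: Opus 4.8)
The plan is to realize $\tilde f_\alpha$ as the ``union'' of the partial liftings $\hat f_t$ ($t\in[0,\alpha)$) produced in Lemma~\ref{exist}, gluing them together via the unique path lifting property. The first step I would carry out is a compatibility claim: if $0\le s<t<\alpha$, then $\hat f_t|_{[0,s]}=\hat f_s$. Both $\hat f_t|_{[0,s]}$ and $\hat f_s$ are liftings of $f|_{[0,s]}$ sending $0$ to $\tilde x_0$, and precomposing with the linear homeomorphism $I\to[0,s]$ turns them into two liftings on $I$ of a single path starting at the same point; hence the unique path lifting property of $p$ forces them to agree. Note that only the unique path lifting hypothesis is used here, not that $\tx$ is Hausdorff, so Lemma~\ref{unique} is not invoked; this is exactly the feature that makes the argument work under the stated hypotheses.

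Next I would \emph{define} $\tilde f_\alpha:[0,\alpha)\to\tx$ by $\tilde f_\alpha(t)=\hat f_s(t)$ for any $s$ with $t<s<\alpha$ (such an $s$ exists because $t<\alpha$, and $\alpha>0$ since $0\in A_f$). The compatibility claim shows this is independent of the choice of $s$, that $\tilde f_\alpha(0)=\tilde x_0$, and that $p\circ\tilde f_\alpha=f|_{[0,\alpha)}$ because $p\circ\hat f_s=f|_{[0,s]}$ for each $s$. For continuity I would fix $t_0\in[0,\alpha)$, pick $s$ with $t_0<s<\alpha$, and observe that $[0,s)$ is an open neighbourhood of $t_0$ in $[0,\alpha)$ on which $\tilde f_\alpha$ coincides with the continuous map $\hat f_s$; hence $\tilde f_\alpha$ is continuous at $t_0$, and therefore on $[0,\alpha)$.

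For uniqueness, I would take any continuous $g:[0,\alpha)\to\tx$ with $p\circ g=f|_{[0,\alpha)}$ and $g(0)=\tilde x_0$ (the base-point condition being implicit in the statement, and satisfied by the $\tilde f_\alpha$ just built). For each $s<\alpha$, the maps $g|_{[0,s]}$ and $\tilde f_\alpha|_{[0,s]}=\hat f_s$ are liftings of $f|_{[0,s]}$ starting at $\tilde x_0$, so by the unique path lifting property (again after reparametrising $[0,s]$ to $I$) they coincide; since $s<\alpha$ was arbitrary, $g=\tilde f_\alpha$.

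I do not expect a genuine obstacle here: the construction is essentially that $\tilde f_\alpha$ is the colimit of the directed family $\{\hat f_t\}_{t<\alpha}$. The only points needing a little care are the transfer of the unique path lifting property from $I$ to the subintervals $[0,s]$ (handled by the obvious reparametrisation) and the verification that the glued map is continuous at every point of the half-open interval $[0,\alpha)$ (handled by the local-agreement argument above).
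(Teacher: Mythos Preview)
Your proposal is correct and follows essentially the same strategy as the paper: both build $\tilde f_\alpha$ by gluing the partial liftings $\hat f_t$ via the unique path lifting property, check continuity by local agreement with one of the $\hat f_s$ (the paper uses $s=\tfrac{\alpha+t}{2}$, you use an arbitrary $s>t$), and deduce uniqueness from unique path lifting on each $[0,s]$. Your explicit compatibility claim and remark about the implicit base-point condition $\tilde f_\alpha(0)=\tilde x_0$ are slightly more careful than the paper's exposition, but the argument is the same.
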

\begin{proof}
First, we defined $\tilde{f}_{\alpha}: A_f = [0,\alpha) \to \tx$ by $\tilde{f}_{\alpha}(s) = \hat{f}_s(s)$. The map $\tilde{f}_{\alpha}$ is well define since if $s_1 = s_2 $, then by unique path lifting property of $p$ we have  $\hat{f}_{s_1} = \hat{f}_{s_2}$ and so $\hat{f}_{s_1}(s_1) = \hat{f}_{s_2}(s_2)$ hence $\tilde{f}_{\alpha}(s_1) = \tilde{f}_{\alpha}(s_2)$. The map $\tilde{f}_{\alpha}$ is continuous since for any element $s$ of $A_f$, $\hat{f}_{\frac{\alpha+s}{2}}$ is continuous at $s$ and $\hat{f}_{\frac{\alpha+s}{2}} = \hat{f}_{s}$ on $[0, s]$. Thus there exists $\epsilon > 0$ such that $\tilde{f_{\alpha}}|_{(s-\epsilon,  s+\epsilon)}= \hat{f}_{\frac{\alpha+s}{2}}|_{(s-\epsilon,  s+\epsilon)} $. Hence $\tilde{f}_{\alpha}$ is continuous at $s$. For uniqueness, if there exists $\hat{f}_{\alpha}:[0,\alpha) \to \tx$ such that $p\circ\hat{f}_{\alpha} = f|_{[0,\alpha)}$, then by unique path lifting property of $\tx $ we must have $\tilde{f}_{\alpha} = \hat{f}_{\alpha}$.
\end {proof}
\begin{defn}
By Lemmas \ref{exist} and \ref{cts}, we called $\tilde{f}_{\alpha}$ the {\it incomplete lifting} of $f$ by $p$ starting at $\tilde{x}_0$.
\end{defn}
\begin{thm}\label{sequential}
If $\tilde{X}$ is Hausdorff and sequential compact and $p:\tilde{X} \To X$ is a local homeomorphism, then $p$ has the path lifting property.
\end{thm}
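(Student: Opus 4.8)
The plan is to argue by contradiction. Suppose some path $f$ in $X$, say with $f(0)=x_0$, admits no lifting starting at a chosen $\tilde{x}_0\in p^{-1}(x_0)$. Since $\tx$ is Hausdorff, Lemma \ref{unique} (applied with $Y=I$) shows that $p$ has the unique path lifting property, so Lemmas \ref{exist} and \ref{cts} are available: we get $A_f=[0,\alpha)$ for some $\alpha\in(0,1]$ together with the incomplete lifting $\tilde f_\alpha:[0,\alpha)\to\tx$ of $f$ starting at $\tilde{x}_0$, i.e. a continuous map with $p\circ\tilde f_\alpha=f|_{[0,\alpha)}$. The whole point will be to show that $\tilde f_\alpha$ in fact extends to a continuous lifting on the closed interval $[0,\alpha]$; this contradicts the equality $A_f=[0,\alpha)$ from Lemma \ref{exist}, since $\alpha$ would then lie in $A_f$. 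Hence no such $f$ exists and $p$ has the path lifting property.

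To build the extension I would first invoke sequential compactness of $\tx$. Pick an increasing sequence $t_n\to\alpha$ in $[0,\alpha)$; then $\{\tilde f_\alpha(t_n)\}$ has a subsequence $\tilde f_\alpha(t_{n_k})$ converging to some $\tilde y\in\tx$. Using continuity of $p$ and of $f$ one reads off $p(\tilde y)=\lim_k p(\tilde f_\alpha(t_{n_k}))=\lim_k f(t_{n_k})=f(\alpha)$ (here one compares two limits of the sequence $f(t_{n_k})$, which is immediate when $X$ is Hausdorff). Now choose an open neighbourhood $W$ of $\tilde y$ on which $p|_W:W\to p(W)$ is a homeomorphism onto an open set. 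Since $\tilde f_\alpha(t_{n_k})\to\tilde y$ we have $\tilde f_\alpha(t_{n_k})\in W$ for $k$ large, and since $f(\alpha)=p(\tilde y)\in p(W)$ and $f$ is continuous we also get $f([t_{n_k},\alpha])\subseteq p(W)$ for $k$ large.

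The decisive step is then to compare the two liftings $\tilde f_\alpha|_{[t_{n_k},\alpha)}$ and $(p|_W)^{-1}\circ f|_{[t_{n_k},\alpha)}$ of the path $f|_{[t_{n_k},\alpha)}$. For $k$ large, $\tilde f_\alpha(t_{n_k})$ is a point of $W$ lying over $f(t_{n_k})$, hence equals $(p|_W)^{-1}(f(t_{n_k}))$; since $[t_{n_k},\alpha)$ is connected, the unique path lifting property forces $\tilde f_\alpha|_{[t_{n_k},\alpha)}=(p|_W)^{-1}\circ f|_{[t_{n_k},\alpha)}$. But the right-hand side is defined and continuous on the whole closed interval $[t_{n_k},\alpha]$, so setting $\tilde f_\alpha(\alpha):=(p|_W)^{-1}(f(\alpha))$ and applying the gluing lemma to the closed cover $[0,\alpha]=[0,t_{n_k}]\cup[t_{n_k},\alpha]$ produces a continuous lifting of $f|_{[0,\alpha]}$ starting at $\tilde{x}_0$. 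Thus $\alpha\in A_f$, contradicting $A_f=[0,\alpha)$, and the proof is complete.

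I expect the main obstacle to be the middle step: a priori the incomplete lift $\tilde f_\alpha$ could oscillate as $t\to\alpha^-$, so sequential compactness only yields a subsequential limit $\tilde y$, and one must also be careful about the topology of $X$ when identifying $p(\tilde y)$ with $f(\alpha)$. What makes the argument go through is that the unique path lifting property, once $\tilde f_\alpha(t_{n_k})$ enters the evenly covered neighbourhood $W$, rules out any oscillation: it pins $\tilde f_\alpha$ down to $(p|_W)^{-1}\circ f$ on an entire terminal subinterval $[t_{n_k},\alpha)$, so the subsequential limit is in fact a genuine limit and the lift extends continuously to $\alpha$.
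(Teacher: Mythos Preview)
Your argument is correct and follows essentially the same route as the paper's own proof: assume a path $f$ has no lift, take the incomplete lifting $\tilde f_\alpha$ on $[0,\alpha)$, use sequential compactness of $\tilde X$ to extract a subsequential limit $\tilde y$ of $\tilde f_\alpha(t_{n_k})$, and then use a chart $W$ at $\tilde y$ together with unique path lifting to identify $\tilde f_\alpha$ with $(p|_W)^{-1}\circ f$ on a terminal subinterval, thereby extending the lift to $\alpha$ and obtaining a contradiction. Your write-up is in fact slightly more explicit than the paper's at the key step (you invoke Lemma~\ref{unique} to match the two lifts on $[t_{n_k},\alpha)$, whereas the paper asserts this identification somewhat tersely), and your parenthetical remark about needing unique limits in $X$ to get $p(\tilde y)=f(\alpha)$ flags a point the paper passes over silently.
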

\begin{proof}
Let $f: I \to X$ be a path which has no lifting starting at $\tilde{x}_0 \in p^{-1}(f(0))$. Using the notion of Lemma \ref{exist}, let $\tilde{f}: A_f = [0, \alpha) \to \tx$ be the incomplete lifting of $f$ at $\tilde{x}_0$. Suppose $ \{ t_n\}_{0}^ \infty$ is a sequence in $A_f$ which tends to $t_0$ and $t_n \leq t_0$. Since $ \tx$ is a sequential compact, there exists a convergent subsequence of ${\tilde{f}(t_n)}$, $ \{\tilde{f}(t_{n_k})\}_{0}^ \infty$ say, such that $\tilde{f}(t_{n_k})$ tends to $l$.
We define
$$ g(t) =  \begin{cases}  \tilde{f}(t) & 0\leq t <t_0 \\ l = \lim _{k \rightarrow \infty} \tilde{f}(t_{n_k}) & t =t_0.\end{cases} $$
We have $p (l) = p ( \lim_{k \rightarrow \infty} \tilde{f}(t_{n_k}))= \lim _{k \rightarrow \infty} p( \tilde{f}(t_{n_k}))= \lim _{k \rightarrow \infty} f(t_{n_k})=f(t_0)$ and so $p \circ g = f$. We show that $g$ is continuous on $[0, t_0]$, for this we show that $g$ is continuous at $t_0$. Since $p$ is a local homeomorphism, there exists a neighborhood $W$ at $l$ such that $p|_W:W \to p(W)$ is a homeomorphism. Hence there is $a \in I$ such that $f([a, t_0]) \subseteq p(W)$. Let $V$ be a neighborhood at $l$ and $W' = V \cap W$, then $ p(W') \subseteq  p(W)$ is an open set. Put $U = f^{-1}(p(W')) \cap (a, t_0] $ which is open in $[0, t_0]$ at $t_0$. It is enough to show that $ g(U) \subseteq W'$. Since $f(U) \subseteq p(W')$ and $p$ is a homeomorphism on $W'$, $(p|_W)^{-1}(f(U))\subseteq (p|_W)^{-1}(p(W'))$ and so $(p|_W)^{-1}\circ f = \tilde{f}$ on $[a, t_0)$ since $p(l) = f(t_0)$. Hence $(p|_W)^{-1} \circ f = g$ on $[a, t_0]$. Thus $ g(U) \subseteq g (f^{-1}(p(W')) ) = (p|_W)^{-1} \circ f (f^{-1}(p(W')) ) \subseteq  (p|_W)^{-1} \circ p(W') \subseteq W' \subseteq V$, so $g$ is continuous. Hence $t_0 \in A_f$, which is a contradiction.
\end {proof}
\begin{cor}\label{sequential semi}
If $\tilde{X}$ is Hausdorff and sequential compact and $p:\tilde{X} \To X$ is a local homeomorphism, then $p$ is a semicovering map.
\end{cor}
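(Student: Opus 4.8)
The plan is to deduce this immediately from the simplified characterization of semicovering maps in Theorem~\ref{semicovering map}, which says that a local homeomorphism $p:\tilde X\to X$ is a semicovering map precisely when it has both the path lifting property and the unique path lifting property. So the work reduces to checking these two properties under the hypotheses that $\tilde X$ is Hausdorff and sequential compact.

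First I would observe that the path lifting property is exactly the conclusion of Theorem~\ref{sequential}: since $\tilde X$ is Hausdorff and sequential compact and $p$ is a local homeomorphism, every path in $X$ lifts. Next, for the unique path lifting property, I would invoke Lemma~\ref{unique} with $Y=I$; as $I$ is connected and $\tilde X$ is Hausdorff, any path $f$ in $X$ admits at most one lifting starting at a prescribed point of $p^{-1}(f(0))$. Combining these two facts with Theorem~\ref{semicovering map} gives that $p$ is a semicovering map.

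I do not expect any genuine obstacle here: the substantive content has already been carried out in Theorem~\ref{sequential}, whose proof uses the incomplete lifting of Lemmas~\ref{exist} and~\ref{cts} together with sequential compactness of $\tilde X$ to extract a convergent subsequence of the defective lifting and thereby contradict the maximality of $A_f=[0,\alpha)$. The only point to be careful about is that the hypotheses of the cited results match up exactly---Hausdorffness alone suffices for uniqueness (Lemma~\ref{unique}), while Hausdorffness together with sequential compactness is needed for existence (Theorem~\ref{sequential})---and both are among the standing assumptions, so the corollary follows.
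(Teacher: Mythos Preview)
Your proposal is correct and follows essentially the same approach as the paper: invoke Theorem~\ref{sequential} for the path lifting property, Lemma~\ref{unique} (with $Y=I$) for the unique path lifting property, and then conclude via Theorem~\ref{semicovering map}. There is nothing to add.
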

\begin{proof}
Since $\tilde{X}$ is Hausdorff and sequential compact, by Theorem \ref{sequential}, the map $p$ has path lifting property and by Lemma \ref{unique} $p$ has unique path lifting property. Hence Theorem \ref{semicovering map} implies that $p$ is a semicovering map.
\end {proof}

Chen and Wang \cite[Theorem 1]{Wenyan} showed that a closed local homeomorphism $p$, from a Hausdorff space $\tx$ onto a connected space $X$ , is a covering map, when there exists at least one point $x_0 \in X$ such that $|p^{-1}(x_0)| = k$, for some finite number $k$. In the following theorem, we extend this result for semicovering map without finiteness condition on any fiber.
\begin{thm}\label{closed}
Let $p$ be a closed local homeomorphism from a Hausdorff space $\tx$ onto a space $X$. Then $p$ is a semicovering map.
\end{thm}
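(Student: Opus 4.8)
My plan is to invoke Theorem \ref{semicovering map}: it suffices to show $p$ has the path lifting property and the unique path lifting property. Uniqueness is immediate from Lemma \ref{unique}, since $\tx$ is Hausdorff and $I$ is connected, so the real content is path lifting. I would argue by contradiction. Suppose $f:I\to X$ is a path with no lifting starting at some $\tilde{x}_0\in p^{-1}(f(0))$. Since $p$ has unique path lifting (by Lemma \ref{unique}), Lemmas \ref{exist} and \ref{cts} apply and give the incomplete lifting $\tilde{f}_\alpha:[0,\alpha)\to\tx$ with $p\circ\tilde f_\alpha=f|_{[0,\alpha)}$, where $A_f=[0,\alpha)$. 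The goal is to extend $\tilde f_\alpha$ continuously to $[0,\alpha]$; producing a point $\ell\in p^{-1}(f(\alpha))$ that serves as $\tilde f_\alpha(\alpha)$ would, exactly as in the proof of Theorem \ref{sequential}, contradict the maximality encoded in $A_f=[0,\alpha)$.

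The key step is to use closedness of $p$ to manufacture such a limit point $\ell$. Consider the set $C=\overline{\tilde f_\alpha([0,\alpha))}$ in $\tx$; I claim $C\cap p^{-1}(f(\alpha))\neq\emptyset$. Here is where closedness enters: suppose for contradiction that $p^{-1}(f(\alpha))$ misses $C$. For each $z\in p^{-1}(f(\alpha))$ pick an evenly-covered open neighborhood $W_z$ of $z$ with $p|_{W_z}$ a homeomorphism onto an open set; I would like to shrink things so that near $f(\alpha)$ the curve $\tilde f_\alpha$ stays away from all these sheets, which together with a compactness/closedness argument on the complement produces an open set $V\ni f(\alpha)$ whose preimage is disjoint from the tail of $\tilde f_\alpha$ — contradicting $p\circ\tilde f_\alpha = f$ and $f(t)\to f(\alpha)$. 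More precisely, the standard trick: the set $K=\tx\setminus\bigcup_{z\in p^{-1}(f(\alpha))}W_z$ is closed, so $p(K)$ is closed and does not contain $f(\alpha)$; hence there is an open $V\ni f(\alpha)$ with $V\cap p(K)=\emptyset$, i.e. $p^{-1}(V)\subseteq\bigcup_z W_z$. Since $f$ is continuous, $f(t)\in V$ for $t$ near $\alpha$, so $\tilde f_\alpha(t)\in\bigcup_z W_z$ for such $t$; by connectedness of a tail interval and disjointness of the sheets $W_z$ (using Hausdorffness to separate the $z$'s, or just the homeomorphism property), $\tilde f_\alpha(t)$ lies in a single sheet $W_{z_0}$ for all $t$ close to $\alpha$, and then $\tilde f_\alpha(t)=(p|_{W_{z_0}})^{-1}(f(t))\to (p|_{W_{z_0}})^{-1}(f(\alpha))=z_0$. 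Setting $\ell=z_0$ gives the desired continuous extension of $\tilde f_\alpha$ to $[0,\alpha]$.

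Finally, with $\ell$ in hand, define $g:[0,\alpha]\to\tx$ by $g|_{[0,\alpha)}=\tilde f_\alpha$ and $g(\alpha)=\ell$; the continuity of $g$ at $\alpha$ follows from $\tilde f_\alpha(t)=(p|_{W_{z_0}})^{-1}\circ f(t)$ on a neighborhood of $\alpha$ in $[0,\alpha)$ together with continuity of $(p|_{W_{z_0}})^{-1}\circ f$ at $\alpha$ — this is verbatim the computation at the end of the proof of Theorem \ref{sequential}. Then $\alpha\in A_f$, contradicting $A_f=[0,\alpha)$. Hence every path has a lifting, so $p$ has path lifting; combined with unique path lifting from Lemma \ref{unique}, Theorem \ref{semicovering map} yields that $p$ is a semicovering map. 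The main obstacle is the middle step: extracting the limit point $\ell$ purely from closedness of $p$ (rather than from sequential compactness of $\tx$ as in Theorem \ref{sequential}), and I expect the cleanest route is the closed-image argument $f(\alpha)\notin p(K)$ sketched above, being careful that the tail of $\tilde f_\alpha$ stays in one sheet.
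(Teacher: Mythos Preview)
Your overall architecture matches the paper's: reduce to path lifting via Theorem~\ref{semicovering map} and Lemma~\ref{unique}, assume a path $f$ has no lift, take the incomplete lifting $\tilde f_\alpha:[0,\alpha)\to\tx$ from Lemmas~\ref{exist} and~\ref{cts}, use closedness of $p$ to produce a point of $p^{-1}(f(\alpha))$ that serves as a continuous extension of $\tilde f_\alpha$ to $\alpha$, and derive a contradiction. The divergence is only in how that limit point is produced, and that is where a gap lies. You assert that the sheets $W_z$, one for each $z\in p^{-1}(f(\alpha))$, can be taken pairwise disjoint, so that the connected tail $\tilde f_\alpha([t_0,\alpha))\subseteq\bigcup_z W_z$ must sit in a single $W_{z_0}$. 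But Hausdorffness of $\tx$ separates only finitely many points by disjoint opens; the fiber may be infinite, and nothing in the hypotheses (no normality, no collectionwise Hausdorffness, and $X$ is not even assumed $T_1$, so the fiber need not be closed) lets you separate all of $p^{-1}(f(\alpha))$ by pairwise disjoint open charts. Without disjointness the connectedness argument fails: the tail can pass through overlapping $W_z$'s without settling in any one.

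The paper sidesteps this by applying closedness of $p$ not to $K=\tx\setminus\bigcup_z W_z$ but to the closure of the image of the lift itself: for a sequence $t_n\to\alpha$, the set $p\bigl(\overline{\tilde f_\alpha(\{t_n\})}\bigr)$ is closed and contains $\{f(t_n)\}$, hence contains $f(\alpha)$, yielding a \emph{single} point $\beta\in\overline{\tilde f_\alpha(\{t_n\})}$ with $p(\beta)=f(\alpha)$. One then works only with the one sheet $W_\beta$: since $\beta$ lies in the closure of the lifted sequence, some $\tilde f_\alpha(t_n)$ lies in $W_\beta$ with $f([t_n,\alpha])\subseteq p(W_\beta)$, and unique path lifting forces $\tilde f_\alpha=(p|_{W_\beta})^{-1}\circ f$ on $[t_n,\alpha)$, giving the extension. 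This is exactly what your first formulation (``$C\cap p^{-1}(f(\alpha))\neq\emptyset$'') was aiming at, and it has a one-line direct proof: $p(C)$ is closed and contains $f([0,\alpha))$, hence $f(\alpha)\in\overline{f([0,\alpha))}\subseteq p(C)$. It is the detour through the union of all sheets that introduces the unjustified disjointness assumption.
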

\begin{proof}
Using Theorem \ref{semicovering map}, it is enough to show that $p$ has unique path lifting and path lifting properties. By Lemma \ref{unique}, $p$ has the unique path lifting property. To prove the path lifting property for $p$, suppose there exists a path $f$ in $X$ such that it has no lifting starting at $\tilde{x}_0 \in p^{-1}(f(0))$. Let $g: A_f = [0, \alpha) \to \tx$ be the incomplete lifting of $f$ at $\tilde{x}_0$. Suppose $ \{ t_n\}_{0}^ \infty$ is a sequence which tends to $\alpha$.
Put $B = \{t_n | n \in \NN\}$, then $\overline {g(B)}$ is closed in $\tx$ and so $p(\overline {g(B)})$ is closed in $X$ since $p$ is a closed map. Since $ \overline {f(B)} \subseteq p(\overline {g(B)})$,  $f(\alpha) \in p(\overline {g(B)})$ and so there exists $\beta \in \overline {g(B)}$ such that $p(\beta)= f(\alpha)$. Since $p$ is a local homeomorphism, there exists a neighborhood $W_{\beta}$ at $\beta$ such that $p|_{W_{\beta}} : W_{\beta} \to p(W_{\beta})$ is a homeomorphism. Since $\beta \in \overline {g(B)}$, there exists an $n_k \in \NN$ such that $g(t_{n_r}) \in W_{\beta}$ for every $n_r \geq n_k$. Since $f(\alpha) \in p(W_{\beta})$, there exists $k_1 \in \NN$ such that $f([t_{n_{k_1}}, \alpha]) \subseteq p( W_{\beta})$. Put $h = ((p|_{W_{\beta}})^{-1}\circ f)|_{[t_{n_{k_1}}, \alpha]}$, then $p \circ g = f = p \circ h$ on $[t_{n_{k_1}}, \alpha)$. Hence $p \circ h = p \circ g$ on $[t_{n_{k_1}}, \alpha)$. Since $p|_{W_{\beta}}$ is a homeomorphism, $g (t_{n_{k_1}})=h (t_{n_{k_1}})$, thus $g = h $ on $[t_{n_{k_1}}, \alpha)$. Therefore the map $\bar{g}: [0, \alpha] \to \tx$ defined by
$$ \bar{g}(t) =  \begin{cases}  g(t) & 0\leq t <\alpha \\ h(\alpha) = \beta & t =\alpha \end{cases} $$
is continuous and $p \circ \bar{g} = f $ on $[0, \alpha]$. Hence $\alpha \in A_f $, which is a contradiction.
\end{proof}

A map $f : X \to Y $ is called {\bf proper} if and only if $f^{-1}(H)$ is compact for any compact subset $ H$ of $ Y$.
\begin{thm}\label{proper}
If $p$ is a proper local homeomorphism from a Hausdorff space $\tx$ onto a Hausdorff space $X$, then $p$ is a semicovering map.
\end{thm}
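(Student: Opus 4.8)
The plan is to reduce to Theorem \ref{closed}: if I can show that a proper map from a Hausdorff space to a Hausdorff space is automatically closed, then the conclusion follows immediately, since $p$ is then a closed local homeomorphism from a Hausdorff space onto $X$ and Theorem \ref{closed} applies verbatim. So the real content is the topological fact that \emph{a proper continuous map $p:\tx\to X$ between Hausdorff spaces is closed}, and I would prove this directly rather than invoke it as folklore, since the paper is otherwise self-contained.

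Here is how I would carry out that reduction. Let $C\subseteq\tx$ be closed; I want $p(C)$ closed in $X$, i.e.\ I want to show $X\setminus p(C)$ is open. Fix $x\in X\setminus p(C)$. The fibre $p^{-1}(x)$ is compact (properness applied to the compact set $\{x\}$) and is disjoint from the closed set $C$; using that $\tx$ is Hausdorff — in fact, using that $p$ is a local homeomorphism, each point of $p^{-1}(x)$ has a neighbourhood on which $p$ is injective, and $p^{-1}(x)$ is then discrete and compact, hence finite — I can separate $p^{-1}(x)$ from $C$ by an open set $U\supseteq p^{-1}(x)$ with $\overline U\cap C=\emptyset$, or more simply take $U$ to be a finite union of homeomorphism neighbourhoods of the (finitely many) points of the fibre chosen to miss $C$. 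The standard ``tube lemma for proper maps'' argument then gives an open neighbourhood $V$ of $x$ with $p^{-1}(V)\subseteq U$: indeed $p(\tx\setminus U)$ is the image under the proper (hence, on closed sets, one checks directly) map of the closed set $\tx\setminus U$, and $x\notin p(\tx\setminus U)$; one shows $p(\tx\setminus U)$ is closed by a compactness argument on nets/filters or by the elementary fact that a continuous map is closed iff for every $x$ and every open $U\supseteq p^{-1}(x)$ there is an open $V\ni x$ with $p^{-1}(V)\subseteq U$. Granting such a $V$, we get $V\cap p(C)=\emptyset$ because $p^{-1}(V)\subseteq U$ and $U\cap C=\emptyset$, so $V\subseteq X\setminus p(C)$, proving $p(C)$ closed.

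Once closedness of $p$ is established, I invoke Theorem \ref{closed}: $p$ is a closed local homeomorphism from the Hausdorff space $\tx$ onto $X$, hence $p$ is a semicovering map, which is exactly the claim. (Note that Hausdorffness of $X$ is used only to make the separation/tube argument clean; the local homeomorphism hypothesis already forces the fibres to be discrete, which is what makes ``proper'' bite.)

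The main obstacle I anticipate is purely the bookkeeping in the ``closed map'' step: making the tube-lemma argument rigorous without circularity. The cleanest route is to prove the equivalence ``$p$ closed $\iff$ for all $x\in X$ and all open $U\supseteq p^{-1}(x)$ there exists open $V\ni x$ with $p^{-1}(V)\subseteq U$'' as a standalone lemma, then verify its right-hand side from properness: given $U\supseteq p^{-1}(x)$ open, cover the compact fibre $p^{-1}(x)$ by finitely many homeomorphism neighbourhoods contained in $U$, let $U'$ be their union, and set $V=X\setminus p(\tx\setminus U')$; one checks $V$ is open (again via properness/compactness of fibres) and $x\in V$ and $p^{-1}(V)\subseteq U'\subseteq U$. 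This is routine general topology, so I would state it compactly and spend the bulk of the written proof on the reduction rather than reproving point-set lemmas in full detail.
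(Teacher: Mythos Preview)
Your high-level strategy --- show that $p$ is closed and then invoke Theorem~\ref{closed} --- is exactly what the paper does. The difference, and the gap, is in how you obtain closedness.

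You set $V = X \setminus p(\tx \setminus U')$ and assert that ``one checks $V$ is open (again via properness/compactness of fibres)''. But $V$ being open is literally the statement that $p(\tx \setminus U')$ is closed, i.e.\ it is the closedness of $p$ applied to the closed set $\tx \setminus U'$; so this step is circular. The tube-lemma style argument you outline does go through when the target $X$ is locally compact Hausdorff (or, more generally, a $k$-space): one intersects with a compact neighbourhood $K$ of $x$, pulls back to the compact set $p^{-1}(K)$, and then $p\big((p^{-1}(K))\setminus U'\big)$ is a compact, hence closed, subset of $X$ missing $x$. No such hypothesis on $X$ is available here, and without it ``proper $\Rightarrow$ closed'' is not a theorem in this generality. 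Compactness (indeed finiteness) of the individual fibres does not help, because $\tx\setminus U'$ is not a fibre and need not be compact.

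The paper closes this gap in one line by exploiting a feature of $p$ you never use: a local homeomorphism is \emph{open}, and the paper then cites \cite[Fact~2.3]{Timm} for the fact that an open proper map is closed. With closedness in hand, Theorem~\ref{closed} finishes the proof. So the missing idea in your approach is precisely the openness of $p$; if you want a self-contained argument, the task is to prove ``open $+$ proper $\Rightarrow$ closed'' (for Hausdorff source and target), not ``proper $\Rightarrow$ closed''.
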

\begin{proof}
The map $p$ is open since every local homeomorphism is an open map. Also, every open proper map is closed (see \cite[Fact 2.3]{Timm}). Hence using Theorem \ref{closed}, $p$ is a semicovering map.
\end{proof}

\section{\bf When Is a Semicovering Map a Covering Map?}

If $p:\tilde{X}\to X  $ is a semicovering, then $\pi_1(X, x_0)$ acts on $Y=p^{-1}(x_0)$ by $\alpha \tilde{x}_0 = \tilde{\alpha}(1)$, where $\tilde{x}_0 \in Y $ and $\tilde{\alpha}$ is the lifting of $\alpha$ starting at $\tilde{x}_0$ (see \cite{B1}). Therefore, we can conclude that the stabilizer of $\tilde{x}_0$, $\pi_1(X, x_0)_{\tilde{x}_0}$, is equal to $p_*(\pi_1(\tilde{X}, \tilde{x}_0))$ for all $\tilde{x}_0 \in Y$ and so $|Y|=[\pi_1(X, x_0):p_*(\pi_1(\tilde{X}, \tilde{x}_0))]$. Thus, if $x_0, x_1 \in X $, $Y_0=p^{-1}(x_0)$, $Y_1=p^{-1}(x_1)$ and $\tx$ is a path connected space, then $|Y_0|=|Y_1|$. Hence one can define the concept of sheet for a semicovering map similar to covering maps.

The following example shows that there exists a local homeomorphism with finite fibers which is not a semicovering map.
\begin{ex}\label{ex1}
Let $\tx = (0, 2)$  and let $X = S^1$. Define $p:\tx \to X$ by $  p(t) = e^{2 \pi it} $. It is routine to check that $p$ is an onto local homeomorphism which their fibers are finite but $p$ is not a semicover since
\[
|p^{-1}((0, 1))| = 2 \neq 1 = |p^{-1}((1, 0))|.
\]
\end{ex}

Note that the core of a subgroup $H$ of $G$, denoted by $H_G$, is defined to be the join of all the normal subgroups of $G$ that are contained in $H$. It is easy to see that $H_G =\bigcap_{g \in G}g^{-1}Hg.$
\begin{thm} \label{pp}
Suppose $p:(\tilde{X}, \tilde{x}_0) \to (X, x_0) $ is a semicovering map and $X$ is locally path connected such that $[\pi_1(X,x_0): p_*(\pi_1(\tilde{X},\tilde{x}_0))]$ is finite, then $p$ is a covering map.
\end{thm}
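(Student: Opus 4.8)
The plan is to use the classification result quoted in the introduction: for a locally path connected space $X$, a semicovering map $p:\tilde{X}\to X$ is a covering map if and only if the core of $p_*(\pi_1(\tilde{X},\tilde{x}_0))$ in $\pi_1(X,x_0)$ is an open subgroup of the quasitopological fundamental group $\pi_1^{qtop}(X,x_0)$. So it suffices to prove that this core is open whenever the index $[\pi_1(X,x_0):p_*(\pi_1(\tilde{X},\tilde{x}_0))]$ is finite.

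First I would set $H=p_*(\pi_1(\tilde{X},\tilde{x}_0))$ and note that, since $H$ has finite index in $G=\pi_1(X,x_0)$, its core $H_G=\bigcap_{g\in G}g^{-1}Hg$ is an intersection of only finitely many conjugates of $H$ (the conjugates are indexed by the cosets $G/H$, of which there are finitely many), and each such conjugate $g^{-1}Hg$ again has finite index in $G$. Hence $H_G$ itself is a finite-index (normal) subgroup of $G$. The key observation is then that a finite-index subgroup of $\pi_1^{qtop}(X,x_0)$ is automatically open: indeed, $H$ open implies each conjugate $g^{-1}Hg$ is open (conjugation is a homeomorphism of the quasitopological group), so a finite intersection of opens is open. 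Thus the argument reduces to showing that $H=p_*(\pi_1(\tilde{X},\tilde{x}_0))$ is itself an open subgroup of $\pi_1^{qtop}(X,x_0)$.

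To see that $H$ is open, I would use that $p$ is a semicovering, hence in particular a local homeomorphism with path lifting property, and recall (as in the introduction) that $\pi_1(X,x_0)$ acts on the fiber $Y=p^{-1}(x_0)$ with stabilizer of $\tilde{x}_0$ equal to $H$. The standard fact for semicoverings — which follows from the continuity of path lifting built into the definition of a semicovering, i.e.\ from $\rho_p$ being a homeomorphism — is that the stabilizer subgroups $p_*(\pi_1(\tilde{X},\tilde{x}_0))$ are open in $\pi_1^{qtop}(X,x_0)$. Concretely: choose an evenly-covered-in-the-path-lifting-sense neighbourhood of $\tilde{x}_0$, pull back through $\rho_p^{-1}$ to produce a basic open set of loops in $\Omega(X,x_0)$ whose path-component image lies in $H$, and conclude $H$ is open by definition of the quotient topology on $\pi_1^{qtop}$. (Alternatively one may cite that every semicovering corresponds to an open subgroup under the Torabi et al.\ classification, which is precisely the ``only if'' direction already invoked.)

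Combining: $H$ is open in $\pi_1^{qtop}(X,x_0)$, finitely many conjugates of it are therefore open, their intersection $H_G$ is open, and it is normal and of finite index; so by the cited classification $p$ is a covering map. The main obstacle I anticipate is the openness of $H=p_*(\pi_1(\tilde{X},\tilde{x}_0))$ in $\pi_1^{qtop}(X,x_0)$ — it is the only genuinely topological input, and one must make sure the homeomorphism property of $\rho_p$ (not merely bijectivity) is used to produce an open neighbourhood of the identity inside $H$; everything after that is elementary group theory (cores of finite-index subgroups) together with the already-quoted classification theorem.
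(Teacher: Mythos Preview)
Your proposal is correct and follows essentially the same route as the paper: show that $H=p_*(\pi_1(\tilde X,\tilde x_0))$ is open in $\pi_1^{qtop}(X,x_0)$ (the paper simply cites \cite[Corollary~3.4]{B2} for this), use finite index to write the core $H_G$ as a finite intersection of conjugates of $H$, note each conjugate is open, hence $H_G$ is open, and then invoke the classification via subgroups with open core. The only cosmetic difference is that the paper phrases the final step as producing a covering $q$ with $q_*\pi_1=H$ and concluding $p\simeq q$, which is exactly the ``if'' direction of the criterion you quote.
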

\begin{proof}
Since $p$ is a semicovering map, $p_*(\pi_1(\tilde{X}, \tilde{x}_0))$ is an open subgroup of $\pi_{1}^{qtop}(X, x_0)$ (see \cite[Corollary 3.4]{B2}.) Put $[\pi_1(X, x_0): p_*(\pi_1(\tilde{X}, \tilde{x}_0))]= m$. It is well-known that $p_*(\pi_1(\tilde{X}, \tilde{x}_0))$ is a subgroup of the normalizer of $p_*(\pi_1(\tilde{X}, \tilde{x}_0))$ in $\pi_1(X, x_0)$, $N_{\pi_1(X, x_0)}(p_*(\pi_1(\tilde{X}, \tilde{x}_0)))$ and so $[\pi_1(\tilde{X}, \tilde{x}_0):N_{\pi_1(X, x_0)}(p_*(\pi_1(\tilde{X}, \tilde{x}_0)))]$ is finite. Therefore the core of $p_*(\pi_1(\tilde{X}, \tilde{x}_0))$ in $ \pi_1(X, x_0)$, $p_*(\pi_1(\tilde{X}, \tilde{x}_0))_{\pi_1(X, x_0)}$  is the intersection of a finitely many conjugations of $ p_*(\pi_1(\tilde{X}, \tilde{x}_0))$. Since any conjugation of $p_*(\pi_1(\tilde{X}, \tilde{x}_0))$ is open, $p_*(\pi_1(\tilde{X}, \tilde{x}_0))_{\pi_1(X, x_0)}$ is also open in $\pi_{1}^{qtop}(X, x_0)$. By the classification of connected covering spaces of $X$ (see \cite{t}), there is a covering map $q : (\tilde{Y}, \tilde{y_0}) \to (X, x_0)$ such that $q_*(\pi_1(\tilde{Y}, \tilde{y_0}))= p_*(\pi_1(\tilde{X}, \tilde{x}_0))$. Hence $p, q$ are equivalent as semicoverings. Therefore $p$ is a covering map.
\end{proof}

The following result is an immediate consequence of the above theorem.
\begin{cor} \label{finite sheeted semicovering}
Every finite sheeted semicovering map on a locally path connected space $X$ is a covering map.
\end{cor}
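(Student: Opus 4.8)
The plan is to deduce Corollary~\ref{finite sheeted semicovering} directly from Theorem~\ref{pp}, so essentially no new work is needed beyond unwinding the definition of ``finite sheeted''. First I would recall from the discussion at the beginning of Section~4 that a semicovering map $p:(\tilde X,\tilde x_0)\to(X,x_0)$ with $\tilde X$ path connected has all fibers of the same cardinality, and that this common cardinality equals the index $[\pi_1(X,x_0):p_*(\pi_1(\tilde X,\tilde x_0))]$; this is precisely the number of sheets. Hence the hypothesis that $p$ is finite sheeted says exactly that $[\pi_1(X,x_0):p_*(\pi_1(\tilde X,\tilde x_0))]$ is finite.

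With that observation in hand, the statement is an immediate application of Theorem~\ref{pp}: $p$ is a semicovering map, $X$ is locally path connected, and the index in question is finite, so Theorem~\ref{pp} gives that $p$ is a covering map. I would also remark that the path-connectedness of $\tilde X$ is part of the standing hypotheses on local homeomorphisms fixed in Section~2 (``we assume that $\tilde X$ is path connected and $p$ is surjective''), which is what legitimizes identifying ``finite sheeted'' with ``finite index''; without it the notion of number of sheets would not even be well defined.

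I do not anticipate a genuine obstacle here, since the corollary is labelled as an immediate consequence. The only point requiring a word of care is the translation between the geometric notion (finite sheets, i.e.\ finite fibers) and the algebraic notion (finite index of $p_*(\pi_1(\tilde X,\tilde x_0))$) used in the proof of Theorem~\ref{pp}; that translation is exactly the orbit–stabilizer computation $|p^{-1}(x_0)| = [\pi_1(X,x_0):p_*(\pi_1(\tilde X,\tilde x_0))]$ already recorded before Theorem~\ref{pp}, so the proof reduces to citing it and then citing Theorem~\ref{pp}.
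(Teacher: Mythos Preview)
Your proposal is correct and is exactly the approach the paper takes: the corollary is stated there as an immediate consequence of Theorem~\ref{pp}, and your only added content---the translation of ``finite sheeted'' into ``finite index'' via the equality $|p^{-1}(x_0)| = [\pi_1(X,x_0):p_*(\pi_1(\tilde X,\tilde x_0))]$ recorded just before Theorem~\ref{pp}---is precisely the link the paper intends.
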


Note that Brazas presented an infinite sheeted semicovering map which is not a covering map (see \cite[Example 3.8]{B1}).
Moreover, Theorem 1 in \cite{Wenyan} is an immediate consequence of our results Theorem \ref{closed} and Corollary \ref{finite sheeted semicovering}.

It is well-known that a proper local homeomorphism from a Hausdorff space to a locally compact, Hausdorff space is covering map. Chen and Wang  \cite[Corollary 2]{Wenyan} showed that a proper local homeomorphism from a Hausdorff, first countable space onto a Hausdorff, connected space is a covering map. We extend this result without first countability as follows.
\begin{thm}
If $p$ is a proper local homeomorphism from a Hausdorff space $\tx$ onto a locally path connected, Hausdorff space $X$, then $p$ is a finite sheeted covering map.
\end{thm}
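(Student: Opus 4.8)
The plan is to combine Theorem~\ref{proper} with Corollary~\ref{finite sheeted semicovering}. First I would invoke Theorem~\ref{proper}: since $p$ is a proper local homeomorphism from a Hausdorff space $\tx$ onto the Hausdorff space $X$, it is already a semicovering map. So the only thing that remains is to verify that $p$ is finite sheeted, after which Corollary~\ref{finite sheeted semicovering} (using that $X$ is locally path connected) immediately upgrades $p$ to a covering map, which is then finite sheeted by construction.

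To see that $p$ has finite fibers, fix $x_0 \in X$. The singleton $\{x_0\}$ is compact, so by properness $p^{-1}(x_0)$ is a compact subspace of $\tx$. On the other hand, since $p$ is a local homeomorphism, each $\tilde{x} \in p^{-1}(x_0)$ has an open neighbourhood $\tilde{W}$ on which $p$ is injective, hence $\tilde{W} \cap p^{-1}(x_0) = \{\tilde{x}\}$; that is, $p^{-1}(x_0)$ is discrete in the subspace topology. A space that is simultaneously compact and discrete is finite, so $|p^{-1}(x_0)| < \infty$. Since $\tx$ is path connected (our standing assumption) and $p$ is a semicovering, the discussion at the beginning of Section~4 shows that all fibres have the same cardinality, so $p$ is finite sheeted.

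Finally, applying Corollary~\ref{finite sheeted semicovering} to the finite sheeted semicovering map $p$ over the locally path connected space $X$ yields that $p$ is a covering map, with number of sheets equal to the (finite) cardinality of a fibre. I do not expect a genuine obstacle here: every ingredient is already in place and the argument is essentially bookkeeping. The only point requiring a little care is the observation that properness together with the local-homeomorphism property forces the fibres to be finite, but this is just the standard ``compact plus discrete implies finite'' argument.
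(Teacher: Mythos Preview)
Your proof is correct and follows essentially the same route as the paper: invoke Theorem~\ref{proper} to get a semicovering, use properness plus the local-homeomorphism property to see that fibers are compact and discrete hence finite, and then apply Corollary~\ref{finite sheeted semicovering}. The only difference is that you spell out in slightly more detail why the fibers are discrete and why finiteness of one fiber gives finite-sheetedness, which is fine.
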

\begin{proof}
 The local homeomorphism $p$ is a semicovering map by Theorem \ref{proper}. Since singletons in $X$ are compact and $p$ is proper, every fiber of $p$ is compact in $\tx$. Since fibers of a local homeomorphism are discrete, fibers of $p$ are compact and so they are finite. Therefore $p$ is a finite sheeted semicovering map. Hence by Corollary \ref{finite sheeted semicovering}, $p$ is a covering map.
\end{proof}
\begin{cor}
If $\tilde{X}$ is Hausdorff and sequential compact, $X$ is locally path connected and $p:\tilde{X} \To X$ is a local homeomorphism  with a finite fiber, then $p$ is a covering map.
\end{cor}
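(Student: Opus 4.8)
The plan is to assemble the corollary directly from results already established in the paper, so no new technical machinery should be needed. First I would observe that the hypotheses on $\tx$ are exactly those of Corollary \ref{sequential semi}: $\tx$ is Hausdorff and sequential compact, and $p$ is a local homeomorphism. Hence $p$ is a semicovering map. (Recall that throughout the paper $\tx$ is assumed path connected and $p$ surjective, so this is legitimate.)

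Next I would upgrade ``finite fiber'' to ``finite sheeted.'' Since $\tx$ is path connected and $p$ is a semicovering, the discussion at the beginning of Section 4 gives $|p^{-1}(x)| = [\pi_1(X,x_0):p_*(\pi_1(\tx,\tilde{x}_0))]$ independent of $x \in X$; in particular all fibers have the same cardinality. Thus the hypothesis that \emph{some} fiber is finite forces \emph{every} fiber to be finite, i.e. $p$ is a finite sheeted semicovering map, with number of sheets equal to $[\pi_1(X,x_0):p_*(\pi_1(\tx,\tilde{x}_0))] < \infty$.

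Finally, since $X$ is locally path connected, Corollary \ref{finite sheeted semicovering} (every finite sheeted semicovering map on a locally path connected space is a covering map) applies and yields that $p$ is a covering map, completing the argument.

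There is no real obstacle here: the content is entirely in the cited results, and the only mild point to be careful about is the passage from ``a finite fiber'' to ``finite sheeted,'' which relies on the fiber-cardinality invariance for semicoverings over a path connected total space established in Section 4. One could alternatively phrase the corollary as a consequence of Theorem \ref{sequential} (for path lifting), Lemma \ref{unique} (for unique path lifting), and Theorem \ref{semicovering map}, but invoking Corollary \ref{sequential semi} directly is the cleanest route.
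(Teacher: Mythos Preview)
Your proposal is correct and matches the paper's own proof essentially verbatim: the paper also invokes Corollary \ref{sequential semi} to get a semicovering and then Corollary \ref{finite sheeted semicovering} to conclude. Your explicit justification of the passage from ``a finite fiber'' to ``finite sheeted'' via the fiber-cardinality invariance at the start of Section 4 is a welcome clarification that the paper leaves implicit.
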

\begin{proof}
By Corollary \ref{sequential semi}, $p$ is a semicovering map. Hence $p$ is a finite sheeted semicovering map which is a covering map by Corollary \ref{finite sheeted semicovering}.
\end {proof}

\bibliographystyle{plain}

\begin{thebibliography}{10}

\smallskip


\bibitem{B}{}  D. Biss,  {\it The topological fundamental group and generalized covering spaces},  Topology Appl. {\bf 124} (2002), 355--371.

\bibitem{B1}
J. Brazas,  {\it Semicoverings: A generalization of covering space theory}, Homology Homotopy Appl. {\bf 14} (2012), 33--63.

\bibitem{B2}
J. Brazas,  {\it Semicoverings, coverings, overlays, and open subgroups of the quasitopological fundamental group}, Topology Proc. {\bf 44} (2014), 285--313.

\bibitem{Wenyan}
W. Chen, S. Wang:
 {\it A sufficient condition for covering projection}, Topology Proc. {\bf 26} (2001-2002),  147--152.

\bibitem{F}
H. Fischer and A. Zastrow,  {\it A core-free semicovering of the Hawaiian Earring}, Topology Appl. {\bf 160} (2013), 1957--1967.

\bibitem{Jungck}
G.F. Jungck, {\it Local Homeomorphisms, Dissertationes Mathematicae}, Warsawa, 1983.

\bibitem{Klevdal}{} C. Klevdal, {\it A Galois Correspondence with Generalized Covering Spaces}, Undergraduate Honors Theses, (2015), Paper 956.

\bibitem{Lelek}
A. Lelek and J. Mycielski,  {\it Some conditions for a mapping to be a covering}, Fund. Math. {\bf 59} (1961), 295--300.

\bibitem{r}
E.H. Spanier, {\it Algebraic Topology}, McGraw-Hill, New York, 1966.

\bibitem{Timm}
M. Timm, {\it Domains of perfect local homeomorphisms}, New Zealand J. Math. {\bf 28} (1999), 285--297.

\bibitem{t}
 H. Torabi, A. Pakdaman, B. Mashayekhy:
 {\it On the Spanier groups and covering and semicovering map spaces}, arXiv:1207.4394v1.


\end{thebibliography}

\end{document}